\documentclass[10pt]{amsart}

\addtolength{\voffset}{-2cm}
\addtolength{\hoffset}{-1cm}
\addtolength{\textwidth}{2cm}
\addtolength{\textheight}{2cm}

\usepackage{amsmath,amsfonts, latexsym,graphicx, footmisc, amssymb, mathtools, enumerate}

\usepackage{hyperref}
\hypersetup{colorlinks=true, urlcolor=blue, citecolor=blue, linkcolor=blue}

\newcommand{\U}{{\mathcal U}}
\newcommand{\0}{{\mathbf 0}}
\newcommand{\C}{{\mathbb C}}
\newcommand{\Z}{{\mathbb Z}}

\newcommand{\Proj}{{\mathbb P}}

\newtheorem{defn0}{Definition}[section]
\newtheorem{prop0}[defn0]{Proposition}
\newtheorem{conj0}[defn0]{Conjecture}
\newtheorem{thm0}[defn0]{Theorem}
\newtheorem{lem0}[defn0]{Lemma}
\newtheorem{corollary0}[defn0]{Corollary}
\newtheorem{example0}[defn0]{Example}
\newtheorem{remark0}[defn0]{Remark}
\newtheorem{question0}[defn0]{Question}
\newtheorem{exercise0}[defn0]{Exercise}

\newenvironment{thm}{\begin{thm0}}{\end{thm0}}
\newenvironment{lem}{\begin{lem0}}{\end{lem0}}
\newenvironment{cor}{\begin{corollary0}}{\end{corollary0}}

\newenvironment{exm}{\begin{example0}\rm}{\end{example0}}

\newcommand{\thmref}[1]{Theorem~\ref{#1}}

\newcommand{\corref}[1]{Corollary~\ref{#1}}

\newcommand{\mbf}[1]{{\mathbf #1}}



\title{A Lemma of Lazarsfeld and the Jacobian blow up}

\subjclass[2020]{32S25, 32S05, 32S30, 32S50}

\keywords{hypersurface singularities, jacobian blow up, vanishing cycles, $a_f$ condition}

\author{David B. Massey}

\date{}

\begin{document}

\begin{abstract} For a complex analytic function $f$, the exceptional divisor of the jacobian blow-up is of great importance. In this paper, we show what a lemma from the thesis of Lazarsfeld tells one about the structure of this exceptional divisor.
\end{abstract}

\maketitle




\section{introduction} Let $\U$ be an open subset of $\C^{n+1}$ and let $f:(\U, \0)\rightarrow (\C, 0)$ be a nowhere locally constant complex analytic function. Near the origin, the critical locus $\Sigma f$ of $f$ is contained in the hypersurface $V(f)$ defined by $f$; we assume that $\U$ is chosen small enough so that this is true throughout $\U$. We use $\mbf z:=(z_0, \dots, z_n)$ for the coordinates on $\C^{n+1}$ and so on $\U$.

We let $\pi:\operatorname{Bl}_{j(f)}\U\rightarrow \U$ be the projection map of the blow-up of $\U$ along the jacobian ideal $$j(f):=\left\langle\frac{\partial f}{\partial z_0}, \dots, \frac{\partial f}{\partial z_n}\right\rangle,$$
where $\operatorname{Bl}_{j(f)}\U\subseteq \U\times \Proj^n$. Let $E=\pi^{-1}(\Sigma f)$ denote the exceptional divisor, which is purely $n$-dimensional. Let $W_0$, $W_1$, \dots, $W_r$ denote the distinct  irreducible components of $E$ over $\0$, i.e., the irreducible components $W$ of $E$ such that $\0\in\pi(W)$. 

Now let us identify the $\U\times \Proj^n$ which contains the jacobian blow-up with the projectivized cotangent space $\Proj(T^*\U)$. Under this identification, $\operatorname{Bl}_{j(f)}\U$ is equal to the projectivized closure of the relative conormal of $f$, that is, 
$$\operatorname{Bl}_{j(f)}\U=\Proj\big(\overline{T^*_{f_{|\U\backslash\Sigma f}}\U}\big).$$
For $0\leq k\leq r$, we let $Y_k$ denote the irreducible analytic set $\pi(W_k)$.  Then the fact that $E$ is purely $n$-dimensional, combined with the existence of an $a_f$ stratification, tells us that $W_k$ is equal to the closure of the conormal space of the regular part $Y_k^\circ$ of $Y_k$, that is,  $W_k=\Proj\big(\overline{T^*_{Y_k^\circ}\,\U}\big)$. We refer to the $Y_k$ as {\bf the Thom varieties of $f$ at $\0$}. 

\bigskip

Clearly, the Thom varieties are important for understanding limiting relative conormals and the $a_f$ condition. In addition, the result of \cite{kashnotes} and \cite{lemeb} tells us that, as sets, the exceptional divisor $E$ is equal to the projectivized characteristic cycle of the sheaf of vanishing cycles of $\C^\bullet_{\U}$ (or $\Z^\bullet_{\U}$) along $f$. Thus, the Thom varieties are also closely related to the topology of the Milnor fibers of $f$ at points in $\Sigma f$.

\bigskip

While we prove a more general result in \thmref{thm:thetheorem}, a special case is much easier to state:

\medskip

\noindent {\bf Corollary}. {\it Suppose that $\Sigma f$ is smooth at $\0$. Then, at $\0$, either

\begin{enumerate} 

\item there is a Thom variety of $f$ of codimension 1 in $\Sigma f$, or

\smallskip

\item $\Sigma f$ itself is the only Thom variety, and $f$  defines a family of isolated singularities with constant Milnor number (that is, a simple $\mu$-constant family as given in Definition 1.1 of \cite{lemassey}).
 
 \end{enumerate}
 }
 
 So, if $\Sigma f$ is smooth and there are any proper sub-Thom varieties in $\Sigma f$, then there must be one of codimension $1$. We find this somewhat surprising.
 
 \smallskip
 
 The crux of the proof of our theorem lies in a lemma from the Ph.D. thesis \cite{lazarsfeld} of Lazarsfeld, which we recall in the next section.

\section{The Lemma and the Theorem}

We now state Lemma 2.3 of \cite{lazarsfeld} (with some changes in notation):

\begin{lem}\label{lem:laz}\textnormal{(Lazarsfeld)} Let $Z$ be an irreducible normal variety of dimension $n+1$, and let $X\subseteq Z$ be a subvariety which is locally defined (set-theoretically)  by $n+1-e$ equations. Fix an irreducible component $V$ of $X$. Then, for all $x\in V\cap\overline{X\backslash V}$, $\dim_x \big(V\cap\overline{X\backslash V}\big)\geq e-1$.
\end{lem}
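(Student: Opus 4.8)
The plan is to derive the lemma from a connectedness theorem of Grothendieck and Faltings. First I would dispose of the trivial cases: the inequality is vacuous for $e\le 0$, and for $e=1$ it merely restates the standing hypothesis that $x\in V\cap\overline{X\setminus V}$; so assume $e\ge 2$, put $c:=n+1-e\ge 0$, and note $\dim Z=e+c\ge 2$. As the assertion is local at $x$, I would replace $Z$ by the germ $(Z,x)$ and let $R$ denote its analytic (or formal) local ring; since $Z$ is normal, $R$ is a (complete) normal local \emph{domain} of dimension $d=n+1$. After shrinking the neighborhood, $X$ is cut out set-theoretically near $x$ by $c=d-e$ functions $g_1,\dots,g_c\in\mathfrak m_R$ --- that is, near $x$ it is a set-theoretic complete intersection of codimension $c$ in the normal germ $(Z,x)$.

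The decisive input is the connectedness theorem: for $R$ a complete normal local domain of dimension $d$ and elements $g_1,\dots,g_c$ of its maximal ideal, the germ of $X=V(g_1,\dots,g_c)$ at $x$ is \emph{connected in dimension $d-c-1=e-1$}; concretely, it remains connected after the deletion of any closed analytic subset of dimension at most $e-2$. (For $c=0$ this is nothing but the irreducibility of the normal germ $(Z,x)$; the content is that this irreducibility degrades by at most one dimension with each of the $c$ cuts.) Granting this, I would argue by contradiction. Suppose $\dim_x\big(V\cap\overline{X\setminus V}\big)\le e-2$, and set $Y:=V\cap\overline{X\setminus V}$ and $X':=\overline{X\setminus V}$, so that $X=V\cup X'$ as germs at $x$. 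Then $V\setminus Y$ and $X'\setminus Y$ are relatively closed in $X\setminus Y$; they are disjoint, since a common point would lie in $V\cap X'=Y$; and each is nonempty near $x$, because by Krull's height theorem every irreducible component of $X=V(g_1,\dots,g_c)$ through $x$ has dimension $\ge e>e-2\ge\dim Y$, so neither $V$ nor any branch of $X'$ is contained in $Y$. Hence the germ $X\setminus Y$ is disconnected although $\dim Y\le e-2<e-1$, contradicting the connectedness theorem; therefore $\dim_x\big(V\cap\overline{X\setminus V}\big)\ge e-1$.

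The one genuinely substantial ingredient is the connectedness theorem, and I expect the main obstacle to be invoking it with the correct numerics after cutting by $c$ equations and in a form valid for the complex-analytic germ at hand; the rest is bookkeeping. It is worth pinning down where the hypotheses enter: normality of $Z$ is precisely what makes $R$ a domain, so that \emph{before} cutting $\operatorname{Spec} R$ is irreducible --- without normality the conclusion genuinely fails --- while the hypothesis that $X$ is locally defined by $n+1-e$ equations is used \emph{only} through Krull's height theorem, to prevent $V$ and the branches of $X'$ from being absorbed into the low-dimensional set $Y$. If one prefers not to quote Faltings' refinement, one can instead induct on $c$, removing one $g_i$ at a time and applying the classical form of Grothendieck's connectedness theorem to bound how the connectedness dimension decreases at each step.
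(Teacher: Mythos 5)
The paper does not prove this lemma at all --- it is quoted verbatim (up to notation) from Lazarsfeld's thesis \cite{lazarsfeld}, so there is no in-paper argument to compare against. Your proof is correct, and it is in substance the classical one: reduce to the complete normal local domain $R$ at $x$ (normality of $Z$ makes the germ irreducible and keeps the completion a domain), invoke the Grothendieck--Faltings connectedness theorem to see that a set-theoretic cut by $c=n+1-e$ elements of $\mathfrak m_R$ is connected in dimension $d-c-1=e-1$, and then observe that if $Y=V\cap\overline{X\setminus V}$ had dimension $\le e-2$, the decomposition $X\setminus Y=(V\setminus Y)\sqcup(\overline{X\setminus V}\setminus Y)$ would be a genuine disconnection because Krull's height theorem forces every branch of $X$ through $x$ to have dimension $\ge e$, so neither piece is swallowed by $Y$. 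Your accounting of where each hypothesis enters (normality only to get a domain before cutting; the equation count only through Krull) is accurate, and your handling of the trivial cases $e\le 1$ is fine. The only point I would press you on is the passage between the analytic germ and $\operatorname{Spec}$ of the completion: you should say explicitly that excellence of analytic local rings preserves normality, dimension, and equidimensionality under completion (or else quote an analytic-local form of the connectedness theorem), since the components of $V$ and of $\overline{X\setminus V}$ may split formally and you need their formal branches to retain dimension $\ge e$. With that sentence added, the argument is complete.
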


\medskip

The proof of our theorem below uses the above lemma in a crucial way. Our proof also uses, as sets, our notation and results on relative polar cycles and L\^e cycles as presented in \cite{lecycles}. We also use the characterization of L\^e cycles given by Remark 10.7 and Corollary 10.5 of \cite{lecycles}, which tells us that, as a set, the L\^e cycle of dimension $j$ is the union of the $j$-dimensional absolute polar cycles of the Thom varieties of dimension $\geq j$ (here, we use the result of \cite{kashnotes} and \cite{lemeb} which tells us that, as sets, the exceptional divisor $E$ of the Jacobian blow-up is equal to the projectivized characteristic cycle of the sheaf of vanishing cycles of $\C^\bullet_{\U}$ or $\Z^\bullet_{\U}$ along $f$.) The reader may see Section 7 (and especially Theorem 7.5) of \cite{numinvar} for the details.

We should mention that our theorem and its proof are closely related to Proposition 1.31 from \cite{lecycles}. However, the hypotheses of that proposition are confusing, the result depends on a coordinate choice, the conclusion is not as general as it could be, and the proof omits important details.

\medskip

\begin{thm}\label{thm:thetheorem} \textnormal{(Thom Going Down) } Let $V$ be an irreducible smooth component of $\Sigma f$ at $\0$. Fix $e\leq \dim V$. Suppose that, at $\0$, for all $j\geq e$, the Thom varieties of $f$ of dimension $j$, which are contained in $V$, are smooth. Then, at $\0$, one of the following must hold:

\begin{enumerate} 

\item $\dim_\0V\cap\overline{\Sigma f\backslash V}\geq e-1$, that is, $V$ intersected with the union of the other irreducible components of $\Sigma f$ has dimension at least $e-1$, or

\smallskip

\item there is a Thom variety of $f$ of dimension $e-1$ inside $V$, or

\smallskip

\item $V=\Sigma f$, and there are no Thom varieties of $f$ of dimension less than $e$.
 
 \end{enumerate}
\end{thm}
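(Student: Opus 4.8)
The plan is to apply Lazarsfeld's lemma (\lemref{lem:laz}) to the ambient space $Z = \operatorname{Bl}_{j(f)}\U$ — or rather to a suitable local model at a point over $\0$ — with $X$ the exceptional divisor $E$ and $V$ the component $W_V := \Proj(\overline{T^*_{V}\,\U})$ sitting over $V$. The key is to set up the numerology correctly. Since $E = \pi^{-1}(\Sigma f)$ is the exceptional divisor of the blow-up along the jacobian ideal $j(f) = \langle \partial f/\partial z_0, \dots, \partial f/\partial z_n\rangle$, it is locally cut out (set-theoretically) by the $n+1$ partial derivatives, so in the notation of \lemref{lem:laz} we have $Z$ of dimension $n+1$ and $X = E$ defined by $n+1 - 0$ equations; but this gives $e = 0$ and $e-1 = -1$, which is vacuous. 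The real input has to be sharper: along $W_V$, because $V$ is smooth of dimension $d := \dim V$ and the Thom varieties of dimension $\geq e$ inside $V$ are smooth, the partial derivatives do not behave independently — one expects that near a generic point of $W_V$ the ideal $j(f)$ is, up to radical, generated by far fewer than $n+1$ functions. Quantifying this is where the relative polar and L\^e cycle machinery of \cite{lecycles} and the characterization via Remark~10.7 and Corollary~10.5 enters.

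The first step is therefore to establish a local defining-equation count for $E$ near a generic point of $W_V$. Using the identification $\operatorname{Bl}_{j(f)}\U = \Proj(\overline{T^*_{f}\U})$ and slicing by generic hyperplanes through $\0$ (the relative polar construction), I would show that after cutting down by $d - e$ generic hyperplanes, the slice $\widetilde f$ of $f$ has the property that $V$ is sliced to a smooth $e$-dimensional component of $\Sigma \widetilde f$, the hypotheses on smoothness of Thom varieties of dimension $\geq e$ are preserved for the top dimension, and the $a_f$ condition guarantees that the exceptional divisor of the sliced blow-up, near the relevant component, is cut out set-theoretically by $n+1 - e$ of the partials (the polar curve being empty in the appropriate range forces the Thom condition, which in turn collapses the generators). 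This is the step I expect to be the main obstacle: one must carefully track how the L\^e cycles of dimensions $\geq e$ being supported on smooth sets translates, via Theorem~7.5 of \cite{numinvar} and the polar–L\^e cycle relations, into a bound on the number of equations needed to define $E$ locally, and one must ensure genericity of the slices does not destroy the component $W_V$.

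With the count $n+1-e$ in hand, I would apply \lemref{lem:laz} with $Z$ the (normalized, if necessary) blow-up — here one needs $\operatorname{Bl}_{j(f)}\U$ to be normal, or else pass to its normalization, checking that the relevant components and projections survive; blow-ups along ideals need not be normal, so I would instead apply the lemma to the normalization and push forward, which is harmless for the set-theoretic conclusions we want. The lemma then gives, for any $p \in W_V \cap \overline{E \setminus W_V}$ lying over $\0$, that $\dim_p(W_V \cap \overline{E\setminus W_V}) \geq e-1$. The final step is to descend this back to $\Sigma f$ via $\pi$ and unwind what $\overline{E \setminus W_V}$ consists of: its components are the $W_k$ for $k \neq V$, whose images are either other components of $\Sigma f$ (yielding conclusion (1), since $\pi$ is generically finite on each $W_k$ so dimensions are preserved) or proper sub-Thom varieties $Y_k \subsetneq V$, and a dimension count on $\pi(W_V \cap W_k) \subseteq Y_k$ forces some such $Y_k$ to have dimension $\geq e-1$; combined with $Y_k \subsetneq V$ and the smoothness hypothesis (which pins down the dimension to be exactly $e-1$ when it is $\geq e-1$ and $<\dim V$, via the structure of the stratification), this yields conclusion (2). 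The remaining case, where $W_V \cap \overline{E \setminus W_V} = \emptyset$ over $\0$, forces $W_V$ to be the only component of $E$ over $\0$, hence $V = \Sigma f$ and there are no Thom varieties of dimension $< e$, which is conclusion (3). I would close by noting that the stated Corollary is the case $e = \dim V = \dim \Sigma f$, where conclusion (1) is impossible (there are no other components) and the trichotomy collapses to the stated dichotomy, with the $\mu$-constant characterization coming from Definition~1.1 of \cite{lemassey}.
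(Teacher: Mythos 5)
Your proposal takes a genuinely different route from the paper --- you work upstairs in $\operatorname{Bl}_{j(f)}\U$ with $X=E$, whereas the paper applies Lazarsfeld's lemma entirely downstairs: it sets $X:=V\big(\frac{\partial f}{\partial z_e},\dots,\frac{\partial f}{\partial z_n}\big)$, which is literally $n+1-e$ equations in the smooth (hence normal) $(n+1)$-dimensional $\U$, observes that after a generic linear change of coordinates $X=\Gamma^e_{f,\mbf z}\cup\Sigma f$ with $V$ an irreducible component of $X$, and then converts the conclusion $\dim_\0 V\cap\Gamma^e_{f,\mbf z}\geq e-1$ into case (2) via the identity $V\cap\Gamma^e_{f,\mbf z}=V\cap\bigcup_{j\leq e-1}\Lambda^j_{f,\mbf z}$ and the polar/L\^e characterization of Thom varieties. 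Unfortunately, as sketched, your version has two genuine gaps that I do not see how to repair within your framework.

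First, the step you yourself flag as ``the main obstacle'' --- producing a local defining-equation count of $n+1-e$ for $E$ near a generic point of $W_V$ --- is never carried out, and it is not clear what the correct statement would even be: $E$ is the exceptional divisor of a blow-up of an ideal, hence a Cartier divisor, so it is locally cut out by \emph{one} equation, which makes Lazarsfeld's lemma give only the (true but useless here) statement that components of $E$ meet in dimension $\geq n-1$. The number $n+1-e$ is naturally attached to a subset of the partial derivatives downstairs, not to $E$ upstairs; your proposal in effect conflates the number of generators of $j(f)$ with the number of local equations for $E$. Second, even granting an intersection-dimension bound upstairs, your descent step fails: you assert that $\pi$ is generically finite on each $W_k$ so that dimensions are preserved, but $\pi|_{W_k}:W_k\to Y_k$ is the projectivized conormal map, whose generic fibers have dimension $n-\dim Y_k$. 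Thus $\dim_p\big(W_V\cap\overline{E\setminus W_V}\big)\geq e-1$ gives essentially no lower bound on $\dim Y_k$ (e.g.\ a component $W_k$ lying entirely over $\0$ can meet $W_V$ in dimension $n-1$ while $Y_k=\{\0\}$). Your reduction of the Corollary from the Theorem, and your treatment of the case $W_V\cap\overline{E\setminus W_V}=\emptyset$, are fine in spirit, but the core of the argument needs to be restructured along the lines of the paper's downstairs computation (or some substitute for it) before the lemma can be brought to bear with the exponent $e$.
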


\begin{proof} Throughout, we work at the origin, i.e., consider the germ of the situation at the origin. 

\medskip

Let 
$$
X:=V\left(\frac{\partial f}{\partial z_e}, \dots, \frac{\partial f}{\partial z_n}\right).
$$
Note that all components of $X$ must have dimension $\geq e$.

Let $\Gamma^e_{f, \mbf z}$ denote the union of the components of $X$ which are {\bf not} contained in $\Sigma f$. By Theorem 1.28 of \cite{lecycles}, we may make a generic linear change of  coordinates (which we still write as $\mbf z$)  so that $\Gamma^e_{f, \mbf z}$ is purely $e$-dimensional (which vacuously allows for $\Gamma^e_{f, \mbf z}$ to be empty); $\Gamma^e_{f, \mbf z}$ is the relative polar variety of dimension $e$. The same theorem implies that we may also select our coordinates so that all of the L\^e cycles have proper dimension.

By our smoothness assumption on the Thom varieties, we may also assume that our coordinates are chosen so that, for all Thom varieties $W\subseteq V$ such that $\dim W\geq e$, the restriction of the maps $(z_0, \dots, z_m)$ to $W$ have no critical points for $m<\dim W$, i.e., $V(z_0, \dots, z_m)$ transversely intersects $W$ for $m<\dim W$. In terms of absolute polar varieties (for our notation and definition, see Definition 7.1 of \cite{numinvar}), this means that, for all Thom varieties $W\subseteq V$ such that $e\leq\dim W$,  the absolute polar varieties (as sets) $\Gamma^m_{\mbf z}(W)$ are empty for $m<\dim W$. Note that $\Gamma^{\dim W}_{\mbf z}(W)=W$.

\smallskip

It is trivial that $X=\Gamma^e_{f, \mbf z}\cup \Sigma f$. Note also that an irreducible component of $\Sigma f$ of dimension $<e$ can not be an irreducible component of $X$ (its dimension is too small), but rather must be contained in an irreducible component of $\Gamma^e_{f, \mbf z}$. However, since our coordinates are such that $\Gamma^e_{f, \mbf z}$ is purely $e$-dimensional, every component of $\Sigma f$ of dimension $\geq e$ must be an irreducible component of $X$. In particular, $V$ is an irreducible component of $X$.

\smallskip

We now apply Lazarsfeld's Lemma. It tells us that, if $\0\in V\cap\big(\Gamma^e_{f, \mbf z}\cup \overline{\Sigma f\backslash V}\big)$, then either
$$
\dim_{\0} V\cap \Gamma^e_{f, \mbf z}\geq e-1 \hskip 0.2in \textnormal{ or }\hskip 0.2in \dim_{\0} V\cap  \overline{\Sigma f\backslash V}\geq e-1.
$$

\medskip

Suppose that we are not in case (1) of the theorem, i.e., suppose that $\dim_\0V\cap\overline{\Sigma f\backslash V}<e-1$. Then either $\dim_{\0} V\cap \Gamma^e_{f, \mbf z}\geq e-1$, or $\0\not\in \Gamma^e_{f, \mbf z}$ and $\0\not \in\overline{\Sigma f\backslash V}$. We claim that these correspond to cases (2) and (3), respectively.

\bigskip

\noindent Case 2:

\smallskip

Suppose that $\dim_\0V\cap\overline{\Sigma f\backslash V}<e-1$ and $\dim_{\0} V\cap \Gamma^e_{f, \mbf z}\geq e-1$.

\smallskip

By Proposition 1.15 of \cite{lecycles}, as sets, 
$$
V\cap \Gamma^e_{f, \mbf z}=V\cap\bigcup_{j\leq e-1}\Lambda^j_{f, \mbf z},
$$
where $\Lambda^j_{f, \mbf z}$ is the purely $j$-dimensional L\^e cycle. Thus, $\dim_{\0} V\cap \Gamma^e_{f, \mbf z}\geq e-1$ implies that $V$ contains an $(e-1)$-dimensional irreducible component $Y$ of $\Lambda^{e-1}_{f, \mbf z}$ at the origin. By Corollary 10.15 and/or Theorem 10.18 of \cite{lecycles}, $Y$ is a component of an absolute polar variety of a Thom variety $T$ (which necessarily must have dimension $\geq e-1$) of $f$; however, since we are assuming that $\dim_\0V\cap\overline{\Sigma f\backslash V}<e-1$, $T$ cannot be contained in another irreducible component of $\Sigma f$, but rather must be contained in $V$. But by our hypotheses, the Thom varieties in $V$ of dimension $>e-1$ are smooth and have no absolute polar varieties of dimension $(e-1)$. Thus $T$ must be $(e-1)$-dimensional and so $Y=T$, and we have the conclusion of Case 2.

\bigskip

\noindent Case 3:

\smallskip

Now suppose that $\0\not\in \Gamma^e_{f, \mbf z}$ and $\0\not \in\overline{\Sigma f\backslash V}$. First, $\0\not \in\overline{\Sigma f\backslash V}$ immediately implies that $V=\Sigma f$ at the origin. As we saw in Case 2, but using that $V=\Sigma f$, we have
$$
V\cap \Gamma^e_{f, \mbf z}=\bigcup_{j\leq e-1}\Lambda^j_{f, \mbf z},
$$
and, as we are assuming that $\0\not\in \Gamma^e_{f, \mbf z}$, this implies that, at the origin, $\Lambda^j_{f, \mbf z}=\emptyset$ for all $j\leq e-1$. But $\Lambda^j_{f, \mbf z}$ includes any Thom variety of dimension $j$, and so there are none for $j\leq e-1$. Therefore, we have the conclusion of Case 3.
\end{proof}

\medskip

Letting $e=d$ in the theorem above, we obtain the corollary from the introduction:

\medskip

\begin{cor}\label{cor:cor} Suppose that $\Sigma f$ is smooth at $\0$. Then, at $\0$, either

\begin{enumerate} 

\item there is a Thom variety of $f$ of codimension 1 in $\Sigma f$, or

\smallskip

\item $\Sigma f$ itself is the only Thom variety, and $f$ defines a family of isolated singularities with constant Milnor number, that is, a simple $\mu$-constant family as given in Definition 1.1 of \cite{lemassey}.
 
 \end{enumerate}

\end{cor}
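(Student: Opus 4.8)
The plan is to apply \thmref{thm:thetheorem} with $V:=\Sigma f$ and $e:=d:=\dim_\0\Sigma f$, and then to translate the three resulting alternatives. First I would observe that, since $\Sigma f$ is smooth at $\0$, it is locally irreducible there, so it coincides with its unique irreducible component; thus $V=\Sigma f$ is an irreducible smooth component of $\Sigma f$ at $\0$ with $\dim_\0 V=d$. Every Thom variety of $f$ at $\0$ is contained in $\Sigma f$ and hence has dimension $\leq d$, so the only Thom variety of dimension $\geq e=d$ that is contained in $V$ is $\Sigma f=V$ itself, which is smooth by hypothesis. Hence the smoothness hypothesis of \thmref{thm:thetheorem} holds for all $j\geq e$, and the theorem applies.

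Next I would run through the theorem's three alternatives. Alternative (1) of the theorem asserts $\dim_\0 V\cap\overline{\Sigma f\backslash V}\geq d-1$; but $V=\Sigma f$ forces $\Sigma f\backslash V=\emptyset$, so this alternative cannot occur. Alternative (2) produces a Thom variety of $f$ of dimension $e-1=d-1$ contained in $V$; since $\dim_\0\Sigma f=d$, this is a Thom variety of codimension $1$ in $\Sigma f$, which is conclusion (1) of the corollary. Alternative (3) says $V=\Sigma f$ (which we already know) and that there are no Thom varieties of $f$ of dimension $<e=d$; together with the observation that every Thom variety has dimension $\leq d$, this says exactly that $\Sigma f$ is the only Thom variety of $f$ at $\0$.

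It then remains, in alternative (3), to recognize this situation as conclusion (2) of the corollary, namely that $f$ is a simple $\mu$-constant family. For this I would return to the L\^e-cycle dictionary recalled in Section~2: the $j$-dimensional L\^e cycle $\Lambda^j_{f,\mbf z}$ is the union of the $j$-dimensional absolute polar cycles of the Thom varieties of dimension $\geq j$. Choosing coordinates generically as in the proof of \thmref{thm:thetheorem} (so that the absolute polar varieties $\Gamma^m_{\mbf z}(\Sigma f)$ vanish for $m<\dim\Sigma f=d$), the fact that $\Sigma f$ is the only Thom variety gives $\Lambda^j_{f,\mbf z}=\emptyset$ for $j<d$, while $\Lambda^d_{f,\mbf z}$ is a purely $d$-dimensional cycle supported on the smooth irreducible $d$-dimensional set $\Sigma f$, hence equals $\lambda^d[\Sigma f]$ for a single integer $\lambda^d\geq 0$. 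A slice transverse to $\Sigma f$ meets it in a single point, so the restriction of $f$ to such a slice has an isolated singularity whose Milnor number is computed by $\lambda^d$ and is therefore the same at every point of $\Sigma f$; the vanishing of all lower L\^e numbers together with the smoothness of $\Sigma f$ is precisely the content of Definition~1.1 of \cite{lemassey}. I expect this last translation — from ``$\Sigma f$ is the only Thom variety'' to the vanishing of the lower L\^e numbers, and thence to the simple $\mu$-constant family condition — to be the only step requiring more than dimension bookkeeping and an appeal to \thmref{thm:thetheorem}.
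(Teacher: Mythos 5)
Your reduction is exactly the paper's: apply \thmref{thm:thetheorem} with $V=\Sigma f$ and $e=d=\dim_\0\Sigma f$, note that smoothness forces irreducibility so alternative (1) of the theorem is vacuous, and match alternatives (2) and (3) with conclusions (1) and (2) of the corollary. Where you diverge is the last step, identifying alternative (3) with a simple $\mu$-constant family. The paper does this in one line: alternative (3) is precisely the situation $\0\notin\Gamma^d_{f,\mbf z}$, i.e., the relative polar variety $\Gamma^d_{f,\mbf z}$ is $0$ as a cycle at the origin, and the equivalence of Conditions 3 and 5 of Theorem 2.3 of \cite{lemassey} then says this is equivalent to $f$ being a simple $\mu$-constant family. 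You instead re-derive (a version of) that equivalence by hand: from ``$\Sigma f$ is the only Thom variety'' you get $\Lambda^j_{f,\mbf z}=\emptyset$ for $j<d$ and $\Lambda^d_{f,\mbf z}=\lambda^d[\Sigma f]$, and then argue that the transverse Milnor number is $\lambda^d$ and hence constant. That argument can be made to work, but as written it leans on an unproved assertion (that the Milnor number of the transverse slice at \emph{every} point of $\Sigma f$, not just a generic one, equals $\lambda^d$ once the lower L\^e numbers vanish --- this is the L\^e--Iomdin/slicing formula for L\^e numbers from \cite{lecycles}, not a tautology), and your claim that the vanishing of the lower L\^e numbers ``is precisely the content of Definition 1.1 of \cite{lemassey}'' is not right: that definition is stated in terms of constancy of the Milnor numbers of the isolated transverse singularities, and its equivalence with the polar/L\^e-number conditions is exactly the theorem the paper cites. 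So: same proof for the main body, a sketchier substitute for the paper's single citation at the end; citing Theorem 2.3 of \cite{lemassey} (or fully quoting the slicing formula for L\^e numbers) would close that remaining gap.
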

\begin{proof} If we let $e=d$ in \thmref{thm:thetheorem}, we obtain essentially the whole corollary. If $\Sigma f$ is smooth, then it is irreducible, and Case 1 from the theorem cannot occur. Cases 2 and 3 from the theorem correspond to Cases 1 and 2, respectively, of the corollary. The only thing that requires further proof is that Case 3 of the theorem implies that $f$ defines a family of isolated singularities with constant Milnor number.

However, Case 3 of \thmref{thm:thetheorem} is the case where $\0\not\in \Gamma^d_{f, \mbf z}$, that is, $\Gamma^e_{f, \mbf z}$ is empty near the origin or, with its cycle structure, is $0$. Then one applies the equivalence of Conditions 3 and 5 from Theorem 2.3 of \cite{lemassey} to conclude that $f$ defines a family of isolated singularities with constant Milnor number (a simple $\mu$-constant family).
\end{proof}

\medskip

We can use \thmref{thm:thetheorem} to prove a version of itself which refers to super-Thom varieties rather than sub-Thom varieties.

\medskip

\begin{thm}\label{thm:thetheorem2} \textnormal{(Thom Going Up) } Let $T$ be an $r$-dimensional Thom variety of $\Sigma f$ at $\0$. Let $V\supseteq T$ be an irreducible component of $\Sigma f$ at $\0$.  Then, at $\0$, one of the following must hold:

\begin{enumerate} 

\item $T=V$, or

\smallskip

\item $T\subseteq V\cap\overline{\Sigma f\backslash V}$, or

\smallskip

\item there exists a Thom variety $T'\subseteq V$ of $f$ at $\0$ such that $T\subseteq\Sigma T'$, or

\smallskip

\item there exists a Thom variety $T'\subseteq V$ of $f$ at $\0$ such $\dim T'=r+1$ and $T\subseteq T'$.
 
 \end{enumerate}
\end{thm}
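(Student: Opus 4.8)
The plan is to reduce ``Thom Going Up'' to ``Thom Going Down'' (\thmref{thm:thetheorem}) by a downward induction on the ambient component $V$. First I would set $d:=\dim V$ and $r:=\dim T$. If $r=d$ then $T=V$ (both are irreducible and $T\subseteq V$), which is case (1), so assume $r<d$. Now I would try to run \thmref{thm:thetheorem} with the choice $e=r+1\leq d=\dim V$; but there is an immediate snag — the hypothesis of that theorem requires the Thom varieties of dimension $\geq e$ contained in $V$ to be \emph{smooth}, and we have no such assumption here. So the real engine has to be applied not to $T$ directly but after first finding, among the Thom varieties strictly between $T$ and $V$ in dimension, a \emph{minimal} one containing $T$; the smoothness hypothesis then needs to be circumvented, not assumed.

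The key steps, in order, are: (i) consider the collection $\mathcal{T}$ of Thom varieties $T'$ of $f$ at $\0$ with $T\subseteq T'\subseteq V$ and $\dim T'>r$; if $\mathcal{T}$ is empty we are in the ``bottom'' situation and I would argue directly (see step (iv)); (ii) if $\mathcal{T}$ is nonempty, pick $T'\in\mathcal{T}$ of \emph{minimal} dimension, say $\dim T'=s$, so that no Thom variety of dimension in the range $(r,s)$ lies between $T$ and $V$; (iii) if $s=r+1$ we are immediately in case (4), so assume $s\geq r+2$, and now compare $T$ with the singular locus $\Sigma T'$ — if $T\subseteq\Sigma T'$ we are in case (3), so assume $T\not\subseteq\Sigma T'$, i.e.\ $T$ meets the regular part $(T')^\circ$; (iv) in this remaining situation, the hope is to apply \thmref{thm:thetheorem} with $V$ replaced by $T'$ (using that, \emph{near a generic point of} $T$, $T'$ is smooth) and $e=r+1$, and then translate its three conclusions: conclusion (1) of \thmref{thm:thetheorem} for $T'$ gives $\dim T\cap\overline{\Sigma f\cap T'\smallsetminus T'}$ — which, chased back, should land us in case (2) or (3) of the present theorem; conclusion (2) produces a Thom variety of dimension $r$, which can only be $T$ itself, giving nothing new but showing consistency; conclusion (3) says $T'=\Sigma f$ with no Thom varieties below dimension $r+1$, forcing $V=\Sigma f=T'$ of dimension $r+1$, hence case (4).

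The main obstacle I expect is step (iv): \thmref{thm:thetheorem} is stated for an irreducible \emph{smooth component of $\Sigma f$}, whereas $T'$ is a Thom variety that is in general neither a component of $\Sigma f$ nor globally smooth. To push the argument through one either needs a relative/localized version of ``Thom Going Down'' — applied after restricting to a generic slice transverse to $T$ inside a neighborhood where $T'$ is smooth, reducing $\dim T'-\dim T$ so that $T$ becomes (the germ of) a point and $T'$ a smooth curve — or one needs to invoke the L\^e-cycle machinery directly: namely, that the hypothesis $T\not\subseteq\Sigma T'$ together with minimality of $s$ means the $(r)$-dimensional L\^e cycle $\Lambda^r_{f,\mbf z}$ (in suitably generic coordinates) has $T$ as a component that is \emph{not} an absolute polar variety of any higher Thom variety contained in $V$ other than through $\Sigma T'$, and then Lazarsfeld's Lemma (\lemref{lem:laz}), applied exactly as in the proof of \thmref{thm:thetheorem} but with $X=V(\partial f/\partial z_{r+1},\dots,\partial f/\partial z_n)$ and the distinguished component $T'$ in place of $V$, forces $\dim_\0 T'\cap\overline{X\smallsetminus T'}\geq r$, which after the polar-versus-L\^e dictionary of \cite{lecycles} splits precisely into cases (2), (3), (4). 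Making the transition between the ``component of $\Sigma f$'' setting and the ``Thom variety'' setting precise — in particular checking that $\Gamma^{r+1}_{f,\mbf z}$ and the L\^e cycles behave well after restricting attention to $T'$ — is where the careful work lies; the rest is bookkeeping on dimensions.
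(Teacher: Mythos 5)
Your overall strategy --- reducing to \thmref{thm:thetheorem} via a minimality argument on the Thom varieties between $T$ and $V$ --- is the right instinct, but there is a genuine gap at exactly the step you flag as the main obstacle, and the obstacle is not resolved. You propose to apply \thmref{thm:thetheorem} with the Thom variety $T'$ playing the role of $V$. This is not legitimate: the hypotheses (and the proof) of that theorem require $V$ to be an irreducible \emph{component of $\Sigma f$}, because the argument hinges on $V$ being an irreducible component of $X=\Gamma^e_{f,\mbf z}\cup\Sigma f$ so that Lazarsfeld's Lemma (\lemref{lem:laz}) can be applied to the pair $\big(V,\ \overline{X\setminus V}\big)$; a Thom variety properly contained in a component of $\Sigma f$ is never such a component, so the lemma says nothing about $T'\cap\overline{X\setminus T'}$. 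The two workarounds you sketch (a ``relative'' version of Thom Going Down, or redoing the L\^e-cycle bookkeeping with $T'$ as the distinguished component) are exactly where the content would have to live, and neither is carried out; as written the proof does not close. The translation of the conclusions in your step (iv) is also off: ``$\Sigma f\cap T'\smallsetminus T'$'' is empty, and conclusion (3) of \thmref{thm:thetheorem} cannot be made to force $\dim T'=r+1$ without further argument.

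The paper sidesteps the obstacle entirely by changing the \emph{basepoint} rather than the ambient component. Assuming Cases 1--3 fail, one removes from $T$ the closed proper subsets $\overline{\Sigma f\setminus V}$, the singular loci $\Sigma T'$ of all Thom varieties $T'\subseteq V$, and all Thom varieties in $V$ not containing $T$; what remains is open and dense in $T$, and one picks a point $x$ there. At $x$ every Thom variety contained in $V$ contains $T$ and is smooth, so \thmref{thm:thetheorem} applies at $x$ with the \emph{same} $V$ and with $e$ equal to the minimal dimension of a Thom variety in $V$ properly containing $T$ at $x$ (such exists since $V$ itself is one). Case 1 of that theorem is excluded because $x\notin\overline{\Sigma f\setminus V}$, and Case 3 because $T$ is a Thom variety of dimension $r<e$ through $x$; Case 2 then yields a Thom variety of dimension $e-1$ through $x$, which must contain $T$ by the choice of $x$ and hence, by minimality of $e$, must equal $T$, forcing $e=r+1$ and giving conclusion (4). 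This ``move to a generic point of $T$'' is the missing idea needed to make your minimality argument rigorous.
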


\begin{proof} Suppose that we are not in Cases 1, 2, or 3. Then, let
$$
X:=T\backslash \Big(\overline{\Sigma f\backslash V} \ \cup \ \bigcup_{T'}\Sigma T' \ \cup \ \bigcup_{T''\not\supseteq T} T''\Big),
$$
where the unions are over all Thom varieties  $T'$ and $T''$ contained in $V$, and $T''\not\supseteq T$. Since we are not in Cases 2 or 3, $X$ is an open, dense subset of $T$. Let $x\in X$. Then, $x\not\in\overline{\Sigma f\backslash V}$ and, at $x$, every Thom variety $T'\subseteq V$ contains $T$ and is smooth at $x$.

\smallskip

We apply \thmref{thm:thetheorem} at $x$ in place of $\0$. Let $e$ be the smallest dimension of a Thom variety $T'\subseteq V$ at $x$ such that $T'$ properly contains $T$; there is such an $e$ since we are not in Case 1, i.e., $V$ itself is a Thom variety in $V$ which properly contains $T$. Then we must be in Case 2 of \thmref{thm:thetheorem}, and there must be a Thom variety $\widetilde T$ of dimension $(e-1)$ in $V$ at $x$. But this $\widetilde T$ must contain $T$ (by the choice of $x$), and we would have a contradiction of the definition of $e$ unless $\widetilde T$ does not {\bf properly} contain $T$. Thus we must have $\widetilde T=T$ at $x$ and $e=r+1$. Since this is true for $x$ in an open, dense subset of $T$, the conclusion of Case 4 follows.
 \end{proof}

\section{Examples}

\begin{exm} Suppose that the irreducible components of $\Sigma f$ at the origin are a line $L$ and a plane $P$. Is it possible that $L$ and $P$ are the only Thom varieties of $f$ at $\0$? The answer is ``no'', and one might suspect that that is because $\{\0\}$ must also be a Thom variety. However, \thmref{thm:thetheorem} with $e=2$ tells us  that, in fact, the plane $P$ must contain a $1$-dimensional Thom variety.

Let us look at a specific example. Let $f=w^2+xyz^2$. Then,
$$
\Sigma f=V\big(2w, yz^2, xz^2, 2xyz\big)=V(w, z)\cup V(w, y, x) = P\cup L.
$$
Of course, $P$ and $L$ are Thom varieties, but  \thmref{thm:thetheorem}  tells us that there must be a $1$-dimensional Thom variety contained in $P$.

The reader is invited to calculate the blow-up of the jacobian ideal to show that the other Thom varieties are, in fact, $V(w, z, x)$, $V(w, z, y)$ and $\{\0\}$.
\end{exm}

\medskip

\begin{exm} Is the smoothness requirement in \thmref{thm:thetheorem} and \corref{cor:cor} really necessary? Yes. Consider
$f=w^2+(x^2+y^2+z^2)^2$. Then,
$$
\Sigma f=V\big(2w, \,4(x^2+y^2+z^2)x,\,  4(x^2+y^2+z^2)y,\,  4(x^2+y^2+z^2)z\big)=V(w, x^2+y^2+z^2).
$$
Now $V(w, x^2+y^2+z^2)$ is a Thom variety. However, by symmetry, there cannot be a $1$-dimensional Thom variety inside $V(w, x^2+y^2+z^2)$ and yet, by direct calculation, one can show that $\{\0\}$ is  a Thom variety. 

Thus, if $\Sigma f$ is irreducible, but not smooth, the conclusions of  \corref{cor:cor}  need not hold.
\end{exm}

\bibliographystyle{plain}

\bibliography{Masseybib}

\end{document}